\renewcommand{\footnote}{\endnote}
\newtheorem{theorem}{Theorem}
\newtheorem{lemma}[theorem]{Lemma}
\newtheorem{proposition}[theorem]{Proposition}
\newtheorem{corollary}[theorem]{Corollary}
\theoremstyle{definition}
\newtheorem{definition}[theorem]{Definition}
\newtheorem{remark}[theorem]{Remark}
\newtheorem{example}[theorem]{Example}
\begin{document}


\title{Symbolic dynamics for 
piecewise rotations:\\ 
The case of the bijective symmetric 
maps }

\author{Nicolas B\'edaride\footnote{ Aix Marseille Université, CNRS, Centrale Marseille, I2M, UMR 7373, 13453 Marseille, France. Email: nicolas.bedaride@univ-amu.fr}\and  Idrissa Kabor\'e\footnote{Universit\'e Nazi 
Boni, 
UFR sciences, Bobo-Dioulasso 01 PB 1091, Burkina Faso. Email: ikaborei@yahoo.fr}}

\date{}

\maketitle
\begin{abstract}
We consider a specific 
class of piecewise rotations of the plane that are continuous on two half-planes, 
as studied in \cite{Bosh.Goet.03}, \cite{Goet.Quas.09} and \cite{Che.Goe.Qua.12}. 
Assuming that 
the angle belongs to the set $\{\frac{\pi}{2},\frac{\pi}{3},\frac{\pi}{6},\frac{\pi}{4}\}
$, 
we give a description of the symbolic dynamics of this map in the bijective symmetric case.

{\bf Keywords: Piecewise isometries. Renormalization. Substitution. AMS: 37E15, 37B10.}
\end{abstract}

\section{Introduction}
In this paper we consider the dynamics of a 
particular class of piecewise isometries.
A piecewise isometry 
of 
$\mathbb{R}^n$ is defined in the following way: consider a finite set of hyperplanes, and let $X$ be the complement of their union. It  has several connected components. The piecewise isometry is a map $T$ from $X$ to $\mathbb{R}^n$ which is 
defined on each connected component as the restriction of an isometry of $\mathbb{R}^n$. Now consider the 
preimages 
of the union of the hyperplanes by $T$: it is a set of Lebesgue measure zero. Thus, almost every point of $X$ has an orbit under 
$T$, and the resulting dynamical system $(X,T)$ has generated some natural interest since more than 20 years.

The class of such maps has been well studied in dimension 
$1$, 
with primary example given by interval exchange maps: in this case the map is bijective, equal to the identity outside of a compact interval, and the isometries which locally define $T$ are translations. 
More recently the paper \cite{Ad.Ki.Tr.01} has appeared, where for the first time such a system in dimension 2 has been considered.
Since 
then, 
different examples 
in dimension 2 
have been worked 
out, 
in order to exhibit different types of 
behavior, 
see for example \cite{Goet.98} or \cite{As.Go.04}. The first general result has been obtained by 
Buzzi, 
who proved that every piecewise isometry has zero entropy, see \cite{Buzz.01}. An important class of piecewise isometries is the outer billiard, also called dual billiard. Around this map a lot of developments 
have taken place in recent years, 
most prominently through  
the work of 
Schwartz, see \cite{Schw.09}, \cite{Schwartz.10} and \cite{Schwartz.12}. 
In particular he 
describes the first example of a piecewise isometry of the plane which contains 
both at the same time, 
an unbounded orbit and periodic orbits.

In this paper 
we study 
a still different 
example, which was 
first 
introduced 
by Boshernitzan and Goetz 
in \cite{Bosh.Goet.03}. 
This map is called a piecewise rotation. Up to date it is perhaps the piecewise isometry which has 
received most attention, 
see \cite{BK2}, \cite{Goet.Quas.09} and \cite{Che.Goe.Qua.12}. Consider a line in the plane 
(identified for simplicity with the complex plane $\mathbb C$, while the line is assumed to be the real axis $\mathbb{R}$),
and fix 
two points $S_0, S_1$ outside the line 
as well as 
an angle $2\pi\theta\in [0,2\pi)$. 
The map is 
defined on each half plane 
of $\mathbb {C} \smallsetminus \mathbb{R}$ 
by a rotation around $S_i$ with angle $2\pi\theta$.  The phase space of 
this 
map can be described 
by 
two parameters: one for the 
angle, 
and one which measures the relative 
position 
of the centers. If the middle of the segment $[S_0,S_1]$ belongs to the real axis, then this 
second 
parameter is 
zero, 
and the map is called symmetric.
According to the position 
of the centers of rotations 
$S_i$ 
this map can be bijective, 
non-injective or non-surjective. 
In \cite{Bosh.Goet.03} Boshernitzan and Goetz show that in the 
last two 
cases the map is either globally attractive or globally repulsive. In the bijective case, Goetz and Quas have shown that for a rational angle every orbit is bounded, see \cite{Goet.Quas.09}. 
In order to prove this 
result, 
they introduce symbolic dynamics for this 
map, based of 
the notion of 
``rotationally coded'' 
points.

In the present paper we want to give a 
specific 
description of the symbolic dynamics of this map. 
In order to achieve a higher degree of precision 
than the previous results, we restrict our study to a finite family of 
angles, 
and to the symmetric bijective case. 
The 
non-symmetric bijective case is dealt with 
in another paper, see \cite{BK2}.
In the 
case of $\frac{\pi}{4}$ 
we 
exhibit explicitly certain 
bounded orbits which are not periodic, see Section \ref{18}. Thus these orbits do not come from rotationally coded orbits.  
Our method of investigation is close to the one introduced in \cite{Bed.Ca.11} for the outer billiard outside regular polygons. The main idea 
here 
is to find a reasonable set, 
such that one 
can consider the first return map and prove that it is 
conjugate 
to the initial map. This allows us to use substitutions in order to describe the language of the map.
Of course, in most of the cases, the approach is not 
quite as 
simple: 
one needs to introduce several 
different transformations before being able to find a good renormalisation, see Propositions \ref{lien-dual} and \ref{prop-detail} for a complete study of one particular case.

\medskip

{\em This work has been supported by the Agence Nationale de la Recherche -- ANR-10-JCJC 01010.}

\section{Definition of a piecewise isometry and some background}
\subsection{Definitions}
We refer to Figure \ref{fig1}:
Consider a line $l$ in $\mathbb{R}^2$, it splits the plane on two half-planes. Now we define a piecewise isometry $T$ on $\mathbb{R}^2$ such that the restriction to each half-plane is given by a rotation. The two rotations are of the same angle (denoted $2\pi\theta$ with $\theta\in [0,1])$ with different centers. We also assume that the centers of rotation are not on the line $l$. 
Without loss of generality we can identify the plane with the complex numbers $\mathbb{C}$ and the line with the real line $\mathbb{R}$. If the centers have coordinates $z_0$ and $z_1$, the map is given by:
$$\begin{array}{ccc}
\mathbb{C}\setminus\mathbb{R}&\rightarrow&\mathbb{C}\\
z&\mapsto&T(z)=\begin{cases}e^{2i\pi \theta}(z-z_0)+z_0\quad\text{if}\quad Im(z)> 0\\ e^{2i\pi \theta}(z-z_1)+z_1\quad\text{if}\quad Im(z)<0 \end{cases}
\end{array}
$$
\begin{remark}
 Consider the set of complex numbers $z$ such that there exists an integer $n$ with $T^nz\in\mathbb{R}$. This set of points is called the {\bf set of discontinuity points}. It is included in a countable union of lines (the backward images of $\mathbb R$ by some power of the rotations), thus it is of Lebesgue measure zero . Outside this set, for  every point $z$ we can define $T^nz$ for $n\in\mathbb N$. In the sequel we will consider the map $T$ restricted to this set of full Lebesgue measure.  
\end{remark}

Now consider the two images of $\mathbb R$ by the two rotations. We obtain two parallel lines. The map $T$ is bijective if these lines coincide. If not, then the map is either non injective or non surjective.
The case where $T$ is bijective has been studied by Goetz and Quas, see \cite{Goet.Quas.09}. In the bijective case, the map can be written in a particularly easy way, up to a scaling map:
$$T(z)=\begin{cases}e^{2i\pi \theta}(z+s+1)\quad\text{if}\quad Im(z)> 0\\ e^{2i\pi \theta}(z+s-1)\quad\text{if}\quad Im(z)<0 \end{cases}$$
where $s$ is a real number. If $s=0$ the map is called a {\bf symmetric map}. Note that if $s=0$, then the two centers of rotations are centrally symmetric with respect to the origin, see Equation \ref{centres}. 

By a slight abuse of notation the parameter $\theta$ is called the angle of the map.

\begin{figure}
\begin{tikzpicture}[scale=.4]
\draw (-10,0)--(10,0);
\draw[fill] (-2,1) circle(.15);
\draw[fill] (0,-1) circle(.15);
\draw[] (1,3) circle(.15);
\draw[] (-4,4) circle(.15);
\draw[] (-5,-1) circle(.15);
\draw[] (0,-6) circle(.15);

\draw (1,3) node[right]{$m$};
\draw (-4,4) node[right]{$Tm$};
\draw (-5,-1) node[below]{$T^2m$};
\draw (0,-6) node[right]{$T^3m$};
\draw (-2,1) node[below]{$S_0$};
\draw (0,-1) node[right]{$S_1$};

\draw[red] (-1,5)--(-1,-5);
\draw[dashed] (-2,1)--(1,3);
\draw[dashed] (-2,1)--(-4,4);
\draw[dashed] (-2,1)--(-5,-1);
\draw[dashed] (0,-1)--(-5,-1);
\draw[dashed] (0,-1)--(0,-6);
\end{tikzpicture}

\caption{A piecewise rotation of angle $\frac{1}{4}$ and the four first points in the orbit of $m$. In red the image of $\mathbb R$ by the two rotations.}\label{fig1}
\end{figure}
\subsection{Coding of the map}

Let $\mathcal{A}$ be a finite set called  alphabet, a {\bf word} is a finite sequence of elements in $\mathcal{A}$, its length is the number of elements in the sequence. The set of all finite words over $\mathcal{A}$ is denoted $\mathcal{A}^*$, and $\varepsilon$ is the empty word. A (one-sided) infinite sequence of elements of $\mathcal{A}$, $u=(u_n)_{n\in\mathbb{N}}$, is called an infinite word. 
The infinite word $u$ is {\bf periodic} if there exists a finite word $v_0\dots v_n$ such that $u=v_0\dots v_n v_0\dots v_n v_0\dots v_n\dots$ Such an infinite word is denoted $v^\omega$. 
A word $v_0\dots v_k$ {\bf appears} in the word $u$ if there exists an integer $i$ such that $u_{i}\dots u_{i+k}=v_0\dots v_{k}$. In this case we say that $v$ is a {\bf factor} of $u$. For an infinite word $u$, the {\bf language} of $u$ (or the language of length $n$ respectively)  is the set of all words (or all words of length $n$ respectively) in $\mathcal{A}^*$ which appear in $u$. We denote it by $L(u)$ (or $L_{n}(u)$ respectively).

Let $P_0,P_1$ be the two half-planes bounded by the discontinuity line of $T$. 
Let $\phi:\mathbb{C}\setminus\mathbb R\mapsto \{0;1\}^\mathbb{N}$ be {\bf the coding map} where the image of a complex number $z$ is given by $\phi(z)=(u_n)_{n\in\mathbb{N}}$ such that $T^n(z)\in P_{u_n}$ for any integer $n$. 
The image under the coding map of the points which have well defined orbit defines a language by looking at factors of infinite words. 
For an infinite word $u$ in this language, a {\bf cell} is the set of points which are coded by this word:
$C_u=\{z\in \mathbb{C}\setminus \mathbb R, \phi(z)=u\}$.

Remark that the coding map could also be defined if the piecewise isometry was defined on more than two subspaces.
\begin{figure}[]
\begin{center}
\includegraphics[width=8cm]{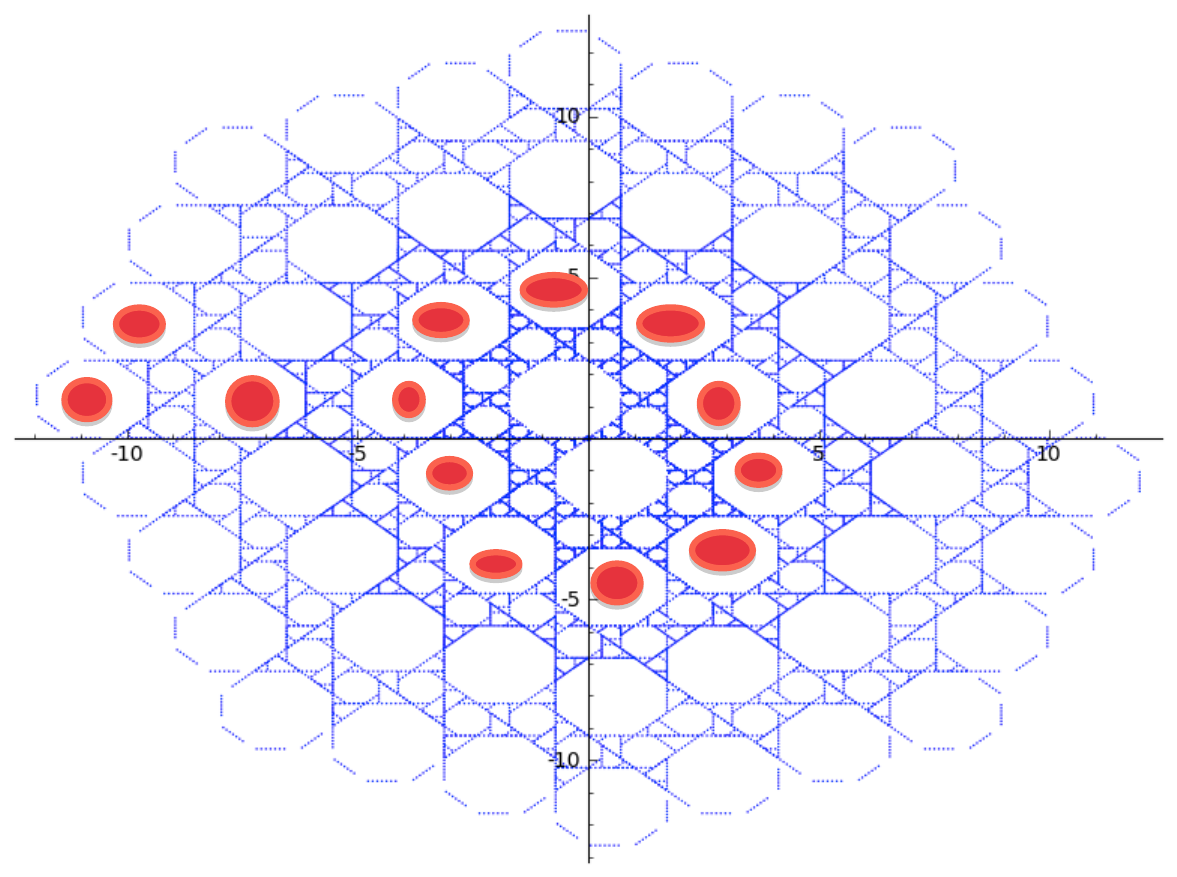}
\caption{The cellular decomposition for the angle $\theta=\frac{1}{8}$ and $s=0$.}\label{fig:tout-8}
\end{center}
\end{figure}

\section{Words and symbolic dynamics}
\subsection{Substitutive languages} 
A {\bf substitution} $\sigma$ is a map from an alphabet $\mathcal{A}$ to the set $\mathcal{A}^*\setminus\{\varepsilon\}$ of non-empty finite words on $\mathcal{A}$. It extends to a morphism of $\mathcal{A}^*$ by concatenation, that is $\sigma(uv)=\sigma(u)\sigma(v)$ for all finite words $u, v$. In the following we will denote a substitution by an array which has in the first line the elements of the alphabet and in the second line their images under the substitution, see $\sigma_4$ on Subsection \ref{sec-per} for example.

Now we introduce the notion of a {\bf substitutive language}. Consider a finite set $S$ of substitutions and  a subset $P\subset<S>$ of the monoid generated by them. Let $X$ be a finite set of words, then a substitutive language is the set of factors of elements of a subset of $\{g(u), g\in P, u\in X\}$. Of course some conditions are needed on $X$ and $P$ to be sure to define a language.
Below, we will specify $P$ and $X$ with the help of a finite oriented graph such that each edge is marked by a substitution of $S$ and on each vertex is labeled by a finite union of words of $X$.
 
We attach to each path on this graph a set of words by the following way: consider one edge marked by $\sigma$ between the vertices $A$ and $B$. Then we associate to this path the set of words $\{\sigma(a)| a\in A\}\cup\{a\in A\}.$ Furthermore to each oriented path obtained by concatenation of several edges we apply the composition of the substitutions to the starting set $A$. Consider the set of all paths on this graph with arbitrary starting vertex. The set $P\subset<S>$ is equal to the concatenation of the substitutions associated to each edge. 

By convention, if there is only one substitution, we will only draw one vertex labelled with a finite set of words. 

\begin{example}
Consider the following graph.
\begin{center}
\begin{tikzpicture}[->,>=stealth',shorten >=1pt,auto,node distance=2cm,
                    semithick]
  \tikzstyle{every state}=[fill=gray,draw=none,text=white]

 \node[state] (A)                    {$X$};
  \node[state] (B) [ right of=A] {X};
    \node[state]  (C) [ right of=B]{X};

  \path (A) edge              node {$\sigma_2$} (B)
                  edge [loop left] node {$\sigma_1$} (A)
           (B)  edge [loop above] node {$\sigma_2$} (B)
                  edge              node {$\sigma_3$} (C)
           (C) edge [loop right] node {$\sigma_3$} (C);
         
\end{tikzpicture}
\end{center}
If $X$ is a set of finite words, the language is the set of factors of the words in the set:
$$Z=\bigcup_{n,m,p\in\mathbb{N}}\{\sigma_3^p\circ\sigma_2^m\circ\sigma_1^n(X)\}.$$

\end{example}
 
 Substitutive languages will be useful in the statement of the results. They already appear for the description of the language of the outer billiard outside a regular pentagon see \cite{Bed.Ca.11}, as well as in the study of $S$-adic systems, see \cite{Bert.Dele.14} for a recent survey.
 
\subsection{Induction and substitution}
In this subsection we start from a general dynamical system $(X,T)$. We recall the definition of the induction of $T$ on a subset. Then we recall some usual facts about the relation between the codings of the two applications. 

Let $X$ be an open subset of $\mathbb{R}^2$, and $T$ a piecewise isometry from $X$ to $X$. Assume there is a partition of $X$ by the sets $U_i$ with 
$i=1,\dots, k$, and consider the language $L_U$ obtained by the associated coding map. Now assume that the first return map of $T$ on $U_1$, denoted by $T_{U_1}$, is well defined for $x\in U_1$ : $$T_{U_1}(x)=T^{n_x}(x),$$ where $n_x$ is the smallest positive integer $n$ such that $T^nx$ belongs to $U_1$ for $x\in U_1$.
Consider the measurable partition $\bigcup_{l\in I} U_1(l)$ of $U_1$ for which the return time $n_{x}$ is constant on each class $U_1(l)$ of the partition. We associate a coding map to the action of $T_{U_1}$ on this partition. We denote the associated language by $L_{U_1}$.

\begin{lemma}\label{leminducsub}
Assume that there exists a bijection $h$ between $X$ and $U_1$ which maps each $U_i$ for $i=1,\dots, k$ to one of the sets $U_1(l)$ such that for any $x\in X$ we have $h^{-1}\circ T_{U_1}\circ h(x)=T(x)$.
Then there exists a substitution $\alpha_{U_1}$ such that
$\alpha_{U_1}(L_{U_1})= L_U$. 

\end{lemma}
\begin{proof}
By hypothesis on $h$ we deduce that the partition of $U_1$ and the partition of $X$ have the same number $k$ of elements.
Thus we denote the elements of the alphabets of the languages of $T_{U_1}$ and $T$ by the same letters.

Consider $x\in X$. Then denote its coding word $\phi(x)$. By definition $h(x)$ belongs to $U_1$. The coding $\phi(h(x))$ of this point begins by $1$ (the letter associated to $U_1$). By definition, the return time of $h(x)$ to $U_1$ is bounded, thus the sequence contains an infinite number of $1's$. Now consider all the positions of $1$ in this sequence and the finite words between two consecutive $1$'s. By assumption the number of return times is finite, thus these words form a finite set and this set is  in bijection with the partition $\{U_1(l), l\}$.  
If $x$ is in one cell of the partition of $U_1$ then denote by $v_i$ the finite word  of length $n_x$ which codes the orbit $T^jx, j=0\dots n_x-1$ where $n_x$ is the return time of $x$ in $U_1$. By definition $n_x$ is independant of the choice of $x\in U_1(i)$. Now define the substitution $\alpha_{U_1}:L_U\rightarrow L_{U_1}$ by 
$$\begin{array}{|c|c|c|c|}
\hline

i&1&\dots&k\\
\hline
\alpha_{U_1}(i)&v_1&\dots& v_k\\
\hline
\end{array}$$
By definition of the substitution $\alpha_{U_1}$, we have $\phi(h(x))=\alpha_{U_1}(\phi(x))$. We deduce that $\alpha_{U_1}(L_{U})\subset L_{U_1}$. The other inclusion is clear by assumption on $h$.


\end{proof}

If the hypothesis of Lemma \ref{leminducsub} is fulfilled we say that the map has a {\bf renormalization} or is self-similar.
This lemma will be used in the different proofs in order to describe the language in terms of substitutions. 
This notion is often used in another context for the description of the symbolic dynamics of a map. 
The words $\alpha_{U_1}(i)$ are called {\bf return words}, since they correspond to the coding of points between the return times.

\section{Link with the dual billiard map}
We recall some classical facts about dual billiard map, also called outer billiard map.
We consider a convex polygon $P$ in $\mathbb{R}^2$ with $n$ vertices. The dual billiard map is a self-map of $\mathbb R^2\setminus P$. Given $m\in \mathbb R^2\setminus P$, one defines $Tm$ so that the segment $[m,Tm]$ is tangent to $P$ at its midpoint and $P$ lies to the right of the ray $\overrightarrow{m Tm}$. The map $m\mapsto Tm$ is called the {\bf dual billiard map}.

\begin{figure}
\begin{center}
\begin{tikzpicture}
\fill (0,0)--(2,0)--(1,2)--(0,1)--cycle;
\draw[dashed] (5,2)--(-1,2);
\draw[dashed] (-1,2)--(1,-2);
\draw (3,2) node{$\bullet$} node[above]{$TM$};
\draw (-1,2) node{$\bullet$} node[above]{$M$};
\draw (1,2) node[above]{$A^+$};
\draw (0,0) node[left]{$A^-$};
\draw (1,-2) node{$\bullet$} node[left]{$T^{-1}M$};
\draw[dashed](3,2)--(1.5,-1);
\draw (4,2) node[above]{$R$};
\end{tikzpicture}
\begin{tikzpicture}[scale=.5]
\fill (0,0)--(2,0)--(1,2)--(0,1)--cycle;
\draw[dashed] (0,0)--++(0,-4);
\draw[dashed] (2,0)--++(4,0);
\draw[dashed] (1,2)--++(-2,4);
\draw[dashed] (0,1)--++(-3,-3);
\end{tikzpicture}

\end{center}
\caption{The outer billiard map outside a quadrilateral.}\label{fig-def-1}
\end{figure}

The dual billiard map outside a regular polygon is known to have bounded orbits since the work of \cite{Ta.95}, \cite{Gutk.Siman.92} or \cite{Beda.13}. 
The symbolic dynamics of these map has begun in \cite{Bed.Ca.11} for the regular pentagon, hexagon and octagon. In \cite{Schwartz.10} Schwartz studied also the dual billiard map outside the regular octagon. In these cases the dual billiard map has a renormalization. We refer also to \cite{Beda.13} for a review on outer billiard.

In Figure \ref{fig4}, we show some pictures of dual billiard map outside regular polygons with $3,5,8$ edges. 
On the left part, we show the cellular decomposition of the plane, in the middle part we restrict to one cone for the regular pentagon, and on the right part we show one ring outside the regular octagon. 
We can remark some resemblances with Figure \ref{fig:tout-8}.
\begin{figure}
\includegraphics[width=4cm]{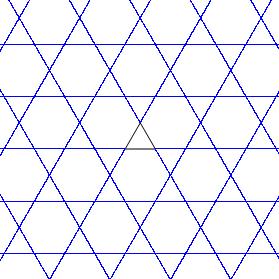}
\includegraphics[width=4cm]{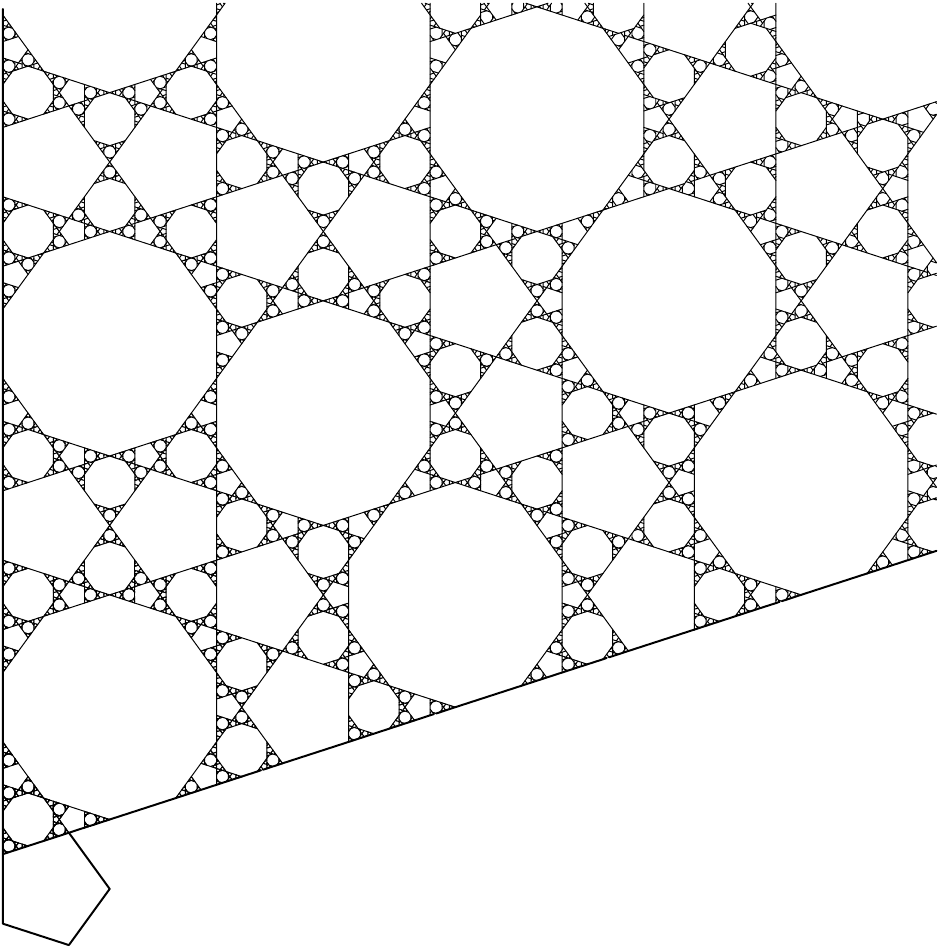}
\includegraphics[width=4cm]{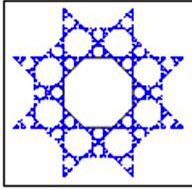}
\caption{Dual billiard dynamics for regular polygons with $3,5$ and $8$ edges. The three pictures represent the cells of the dynamic of the dual billiard map in each case. In the first one, every point has a periodic orbit and the plane is tiled by the cells. In the other cases some fractal sets appear.}\label{fig4}
\end{figure}

To finish this section we recall a map introduced by Schwartz in \cite{Schwartz.10}. The dogbone map is the map $S$ defined by Figure \ref{fig5}. It is a piecewise isometry defined on three sets called $B,C,D$. All the angles in Figure \ref{fig5} are multiple of $\frac{\pi}{4}$. The meaning of the notations will be explained later, see Proposition \ref{lien-dual}.

\begin{figure}
\begin{tikzpicture}[scale=.5]
\draw (-2,0)--(-6.818,0)--++(45:2)--++(0,2)--++(135:2)--++(2,0)--++(45:2)--++(0,2)--++(135:-4.818)--++(-2,0)--++(45:-2)--++(0,-2)--++(135:-2);
\draw (8,0)--(3.182,0)--++(45:2)--++(0,2)--++(135:2)--++(2,0)--++(45:2)--++(0,2)--++(135:-4.818)--++(-2,0)--++(45:-2)--++(0,-2)--++(135:-2);
\draw [dashed] (-3.414,1.414)--(-5.414,3.414);
\draw [dashed] (-3.414,3.414)--++(0,3);

\draw [dashed] (4.586,1.404)--(6.586,3.414);
\draw [dashed] (8,4.818)--++(-3,0);

\draw[->] (0,4)--++(2,0);
\draw (1,3) node{$S$};
\draw (7.5,6) node{$S(C)$};
\draw (5.8,4) node{$S(B)$};
\draw (5.5,1) node{$S(D)$};
\draw (-5,1) node{$C$};
\draw (-4.5,4) node{$B$};
\draw (-2,6) node{$D$};

\end{tikzpicture}
\caption{The definition of the dogbone map.}\label{fig5}
\end{figure}
\section{Results and overview of the paper}
\subsection{Statement}
We consider the angles $\theta\in\{\frac{1}{4},\frac{1}{3},\frac{1}{6},\frac{1}{8}\}$ and give a complete description of the symbolic dynamics for the symmetric maps.

\begin{theorem}\label{thm}
Let $\theta\in\{\frac{1}{4},\frac{1}{3},\frac{1}{6},\frac{1}{8}\}$. Then the language of the bijective symmetric piecewise rotation of angle $\theta$ is substitutive.
\end{theorem}

\begin{remark}
In the cases $\theta\in\{\frac{1}{4},\frac{1}{3},\frac{1}{6}\}$ we can also prove that every orbit is periodic. Indeed it is a consequence from the fact that every orbit is bounded, and by the fact that, in these cases, $T$ is defined by isometries which preserve a lattice.
\end{remark}

The proof is done with Proposition \ref{prop-per} and Proposition \ref{prop-detail}.

\subsection{Overview of the method}
First of all we explain the method of the study :
consider a piecewise rotation with angle $\theta=\frac{p}{q}\in\mathbb Q$. If the map is symmetric, then due to the formulation of the map $T$, the centers of rotations are the points
\begin{equation}
z_0=\frac{e^{2i\pi\theta}}{1-e^{2i\pi\theta}},\quad z_1=\frac{-e^{2i\pi\theta}}{1-e^{2i\pi\theta}}.
\label{centres}
\end{equation}
The imaginary parts of the centers of rotations are equal to $\frac{\pm 1}{2\tan{\pi\theta}}$. In all our cases $\tan{\pi\theta}$ is a positive number, thus each center is a fixed point of $T$ and the codings of these points are $0^\omega$ and $1^\omega$. Consider the cell of one of these periodic words : it is a regular polygon centered at the center of rotation. Depending on the parity of $q$ this polygon has $q$ or $2q$ edges, the length of the side of this polygon is equal to $\tan{(\frac{p\pi}{q})}Im(z_0)$, see Figure \ref{fig=T3} for the case $\theta=\frac{1}{6}$. 
This polygon has one edge on the discontinuity line. 

\begin{definition}
We define the cone $\mathfrak{C}$ as the unique cone such that:
\begin{itemize}
\item its vertex is the vertex of the polygon, on the $\mathbb R$ axis, with the smallest real part. 
\item The cone is included in the upper half plane and delimited by a piece of the discontinuity line $\mathbb R$ and one line supporting a side of the polygon.
\end{itemize}
\end{definition}
An example of cone is given in Figure \ref{fig=T3}.

It is easy to prove the following proposition for every angle $\theta$ in our familly:
See Proposition \ref{prop-per} for a complete proof.

\begin{proposition}
The map $\hat{T}$, first return map of $T$ in $\mathfrak{C}$, is a piecewise isometry. 
The study of $\hat{T}$ is equivalent to the study of $T$. In other terms, there exists a $k$ to one map between the two languages, for some integer $k$.
\end{proposition}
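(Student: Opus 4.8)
The statement bundles three assertions: that the first return map $\hat T$ to $\mathfrak{C}$ is defined almost everywhere, that it is a piecewise isometry, and that the languages $L(\hat T)$ and $L(T)$ are related by a finite-to-one map. I would treat the first two geometrically and deduce the third from Lemma~\ref{leminducsub}. First I would record the shape of the periodic cell: since $z_0$ is fixed by $T|_{P_0}$, which is the rotation $\rho_0$ of angle $2\pi\theta$ centred at $z_0$, the cell of $0^{\omega}$ is $\bigcap_{n\ge 0}\rho_0^{-n}(P_0)$, and as $\theta=p/q$ is rational this is a \emph{finite} intersection of half-planes, hence a convex polygon $\mathcal{P}$, regular and centred at $z_0$, with one side on $\mathbb{R}$. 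The cone $\mathfrak{C}$ is attached to the leftmost vertex $v$ of that side and is bounded by $\mathbb{R}$ and by the support line of the side of $\mathcal{P}$ adjacent to $v$.

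Next I would show that $\hat T$ is defined a.e. By the corollary of Goetz--Quas every orbit of a bijective piecewise rotation with rational angle is bounded (two-sidedly, since $T^{-1}$ is again such a map), so for each $R>0$ the set $A_R=\{z:\sup_{n\in\mathbb{Z}}|T^nz|\le R\}$ is $T$-invariant and has finite Lebesgue measure; as $T$ is bijective and piecewise isometric it preserves Lebesgue measure, so Poincaré recurrence applied to $A_R\cap\mathfrak{C}$ inside $A_R$ shows that a.e.\ point of $A_R\cap\mathfrak{C}$ returns to $\mathfrak{C}$, and letting $R\to\infty$ gives the claim. Then I would check that $\hat T$ is a piecewise isometry: cutting $\mathfrak{C}$ along the discontinuity sets of $T,T^2,\dots$ produces regions each of which is an intersection of $\mathfrak{C}$ with a finite coding cell, hence a convex polygon, on which the return time $n$ is constant and $T^{n}$ is a composition of $n$ rotations of angle $2\pi\theta$, i.e.\ a rotation of angle $2\pi n\theta$ (or a translation); local finiteness of this partition follows from the ring structure, each orbit being confined to a bounded invariant annular region carrying only finitely many cells, so that the return time stays bounded there.

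For the language relation I would use that $\mathfrak{C}$ meets (a.e.) every forward orbit: the $q$ (or $2q$) images of $\mathfrak{C}$ under the powers of $\rho_0$ cover, up to measure zero, a punctured neighbourhood of $\mathcal{P}$, so an orbit can avoid $\mathfrak{C}$ forever only if it is absorbed by $\mathcal{P}$. Writing $\alpha_{\mathfrak{C}}$ for the substitution sending each region of the partition above to the finite word coding the corresponding excursion, Lemma~\ref{leminducsub} gives $\alpha_{\mathfrak{C}}(L(\hat T))\subset L(T)$, and conversely every word of $L(T)$ occurs inside $\alpha_{\mathfrak{C}}(w)$ for some $w\in L(\hat T)$ up to a bounded prefix and suffix (the bounded time before the orbit first enters $\mathfrak{C}$, plus the bounded length of one excursion). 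This is precisely a $k$-to-one correspondence, with $k$ controlled by the number of regions times the maximal return time, and it makes $L(T)$ substitutive if and only if $L(\hat T)$ is; hence studying $\hat T$ is equivalent to studying $T$.

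The main obstacle is the two finiteness facts used above: that the powers of $\rho_0$ carry $\mathfrak{C}$ onto a punctured neighbourhood of $\mathcal{P}$ (so every orbit returns and the correspondence is onto), and that the induced partition of $\mathfrak{C}$ has finitely many pieces with bounded return time (so $\alpha_{\mathfrak{C}}$ is a genuine finite substitution and $\hat T$ a genuine piecewise isometry). Both require an explicit description of how the cells accumulate along $\mathfrak{C}$, which depends on $\theta$; this is why the clean general statement is established by running through the admissible angles one at a time, the case $\theta=\frac16$ being carried out in full in Proposition~\ref{prop-per}.
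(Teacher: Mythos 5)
Your overall plan matches the structure of the paper's argument (induce on the cone $\mathfrak{C}$, identify the return branches as rotations by multiples of the angle, relate the languages through Lemma~\ref{leminducsub}, and defer the explicit partition to an angle-by-angle computation as in Proposition~\ref{prop-per} and the Appendix), but the two ``soft'' steps you supply in place of that computation have genuine gaps. First, the recurrence step: you invoke the Goetz--Quas corollary that for a rational angle every orbit is bounded, but in the paper this is only available for $\sigma$ sufficiently small, whereas the proposition is used for all the bijective cases, including arbitrary rational $\sigma\in(0,1)$ and $\sigma>1$ (Section~\ref{Sec4}); boundedness of all orbits in those cases is part of what the paper sets out to establish, so your Poincar\'e-recurrence argument is unavailable, or circular, exactly where the paper goes beyond Goetz--Quas. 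The paper needs no recurrence theorem at all: the explicit computation of $\hat{T}$ (for $\theta=\frac{1}{6}$, rotating $\mathfrak{C}$ once about $z_0$ and three times about $z_1$, then two more steps) produces a finite partition with a uniformly bounded return time, so every point off the discontinuity set returns after at most a fixed number of iterations.

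Second, the surjectivity of the language correspondence. Your claim that $\mathfrak{C}$ meets almost every forward orbit, because the images of $\mathfrak{C}$ under powers of the rotation $\rho_0$ about $z_0$ cover the exterior of the cell $\mathcal{P}$ of $0^\omega$, conflates $T$-orbits with $\rho_0$-orbits: the rotated cones lie partly in the lower half-plane, where $T$ acts by the rotation about $z_1$, so $z\in\rho_0^{i}(\mathfrak{C})$ does not imply that the $T$-orbit of $z$ ever enters $\mathfrak{C}$. Moreover the claim is false as stated: the cell of $1^\omega$ around $z_1$ has positive measure and never meets $\mathfrak{C}$, so the fixed cells must be excluded and handled separately in the $k$-to-one map (as the paper implicitly does, the codings $0^\omega,1^\omega$ being treated apart). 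Finally, the real content of the proposition --- that the return-time partition of $\mathfrak{C}$ is finite with bounded return time, so that $\alpha_{\mathfrak{C}}$ is an honest substitution and $\hat{T}$ an honest piecewise isometry --- is precisely what you defer to ``an explicit description depending on $\theta$''; that explicit description \emph{is} the paper's proof (Proposition~\ref{prop-per} for $\theta=\frac{1}{6}$ and the Appendix for the other angles), so your write-up ultimately reduces to the same case-by-case computation without performing it, while the general arguments you add in its place are the ones that do not go through.
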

 Thus we will work with the new alphabet denoted by big letters $A, B,\dots$ In each case we will give the map which can transform words on this alphabet on words over the alphabet $\{0,1\}$. We refer to Propositions \ref{prop-per} and \ref{lien-dual} for details depending on the angles. The value of $k$ depends on the initial map $T$.

Now it happens that for an angle in our family we can find a subset and apply Lemma \ref{leminducsub}. Thus we will prove that the map $\hat{T}$ is self-similar, or can be decomposed in different maps which have a renormalization scheme. 
We refer to Propositions \ref{prop-per} and \ref{prop-detail} for details depending on the angles.

\section{Proof of Theorem \ref{thm}}\label{Sec3}
\subsection{Periodic cases}\label{sec-per}
We consider the cases $\theta\in\{\frac{1}{4},\frac{1}{3},\frac{1}{6}\}$.
Let us define the two following substitutions:
$$\sigma_4:\begin{array}{|c|c|c|c|}
\hline
A&B&C&D\\
\hline
DBC&DB&DC&D\\
\hline
\end{array}\quad 
\sigma_6:\begin{array}{|c|c|c|c|c|}
\hline
A&B&C&D&E\\
\hline 
A&AB&AC&ACB&ACCBB\\
\hline
\end{array}
$$

\begin{proposition}\label{prop-per}
The language $\hat{L}$ of the map $\hat{T}$ is the set of factors of the periodic words of the form $z^\omega$ for $z\in Z$, where
\begin{itemize}
\item If $\theta=\frac{1}{4}$, $Z=\displaystyle\bigcup_{n\in\mathbb{N}}\{\sigma_4^n(A),\sigma_4^n(B), \sigma_4^n(C)\}$.
\item If $\theta=\frac{1}{3}$, $Z=\displaystyle\bigcup_{n\in\mathbb{N}}\{A^nB^nC, A^{n+1}B^nC, B^{n+1}A^nC\}$.
\end{itemize}
\end{proposition}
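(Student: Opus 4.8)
The plan is to reduce everything to a study of the first return map $\hat T$ on the cone $\mathfrak C$, whose language is related to that of $T$ by a finite-to-one map (the unnumbered Proposition on $\hat T$), and then to exhibit a renormalisation of $\hat T$ and feed it into the third item of Lemma~\ref{leminducsub}. \textbf{Step 1.} For each $\theta\in\{\frac{1}{4},\frac{1}{6},\frac{1}{3}\}$ I would first make $\hat T$ completely explicit: starting from the centers \eqref{centres} one draws the periodic polygon around $z_0$, identifies the cone $\mathfrak C$, and checks directly that the first return of $T$ to $\mathfrak C$ is a piecewise isometry whose atoms are the regions labelled $A,B,C$ (for $\frac{1}{4}$), $A,B,C,D,E$ (for $\frac{1}{6}$), and the corresponding atoms for $\frac{1}{3}$, recording for each atom the isometry acting on it. This is an elementary, if lengthy, plane computation; the pictures and formulas are gathered in the Appendix, and for $\theta=\frac{1}{6}$ this step already proves the (unnumbered) Proposition describing $\hat T$.

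\textbf{Step 2 (renormalisation, cases $\frac{1}{4}$ and $\frac{1}{6}$).} Next I would isolate inside $\mathfrak C$ a sub-region $U$, similar to $\mathfrak C$ via a contracting similarity $h\colon\mathfrak C\to U$, enjoying two properties: (i) $\mathfrak C\setminus U$ is a union of $\hat T$-invariant polygons, on each of which $\hat T$ acts as a finite cyclic permutation of atoms, the corresponding periodic codings being cyclic shifts of $A$, $B$, $C$ for $\theta=\frac{1}{4}$ and of $E$, $B$, $C$, $D$, $DCB$ for $\theta=\frac{1}{6}$; and (ii) $h$ conjugates $\hat T$ to the first return map $\hat T_U$ of $\hat T$ to $U$ and maps each atom of $\hat T$ onto an atom of $\hat T_U$. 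Computing the return time words attached to the atoms of $U$, I expect to find precisely the substitution $\sigma_4$ (resp. $\sigma_6$). Lemma~\ref{leminducsub}(iii) then yields $L'=L_U$ together with $\sigma_4(L')\subset L'$ (resp. $\sigma_6(L')\subset L'$).

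\textbf{Step 3 (description of $L'$).} Iterating Step 2, the nested regions $U\supset h(U)\supset h^2(U)\supset\cdots$ shrink to the vertex of $\mathfrak C$, since $h$ is a strict contraction; hence every orbit of $\hat T$ either meets some $h^n(U)\setminus h^{n+1}(U)$ and is then captured by one of the periodic polygons sitting there, or is the vertex itself. So every orbit of $\hat T$ is periodic, and its coding equals $\sigma_4^n$ (resp. $\sigma_6^n$) applied to one of the finitely many short periodic codings of Step 2(i), where $n$ is the depth at which the orbit enters the periodic part. This gives both inclusions: each $z\in Z$ is a genuine periodic coding, so $L(z^\omega)\subset L'$; conversely every word of $L'$ occurs in one of these periodic codings and is therefore a factor of some $z^\omega$ with $z\in Z$. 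The first two bullets of the statement follow, and periodicity of all orbits is obtained along the way.

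\textbf{Step 4 ($\theta=\frac{1}{3}$) and the main obstacle.} When $\theta=\frac{1}{3}$ the aperture of $\mathfrak C$ is small enough that $\hat T$ is, away from its vertex, directly conjugate to an exchange of three pieces with a nested family of invariant hexagons; computing the coding of the orbit carried by the $n$-th hexagon gives exactly $A^nB^nC$, $A^{n+1}B^nC$ and $B^{n+1}A^nC$, so that the union over $n$ already exhausts $L'$ and no renormalisation is needed (this family being trivially substitutive). The only genuinely delicate point of the whole argument is Step 2: one must pin down $U$, verify that $\hat T_U$ is \emph{globally} conjugate to $\hat T$ by a similarity respecting the atom partition, check that the return time substitution is literally $\sigma_4$ (resp. $\sigma_6$) and not some refinement or conjugate of it, and --- crucially --- make sure that the periodic polygons inside the $h^n(U)$ together with the vertex exhaust all of $\mathfrak C$, so that Step 3 misses no factor of $L'$.
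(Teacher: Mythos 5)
Your overall strategy is the same as the paper's: make $\hat T$ explicit on the cone, induce on one atom, read off a substitution via Lemma~\ref{leminducsub}, and identify the complement of the induction orbit as a finite union of periodic cells. But the concrete renormalisation you posit in Step 2 is geometrically wrong, and Step 3 depends on it. You ask for a \emph{contracting} similarity $h\colon\mathfrak C\to U$ with the nested copies $h^n(U)$ shrinking to the vertex of the cone, so that periodic polygons of smaller and smaller size accumulate at the vertex. That is not the structure of these maps: for $\theta\in\{\frac14,\frac16\}$ the periodic cells all have a fixed size (they are the translated rings of Goetz--Quas; see the figures and the statement that cells of the same parity are translates of one another), they tile a neighbourhood of the vertex by finitely many polygons, and the self-similar part escapes to infinity, not to the vertex. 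The paper's renormalisation is the first return map of $\hat T$ to the \emph{unbounded} atom ($A$ for $\theta=\frac16$, the corresponding unbounded piece for $\frac14$), and the conjugacy with $\hat T$ is a \emph{translation}. This also matters for the lemma you invoke: Lemma~\ref{leminducsub}(iii) is stated for an isometry $h$, so a strict contraction cannot be fed into it as written; with the translation conjugacy the hypothesis is satisfied verbatim. Your ``capture at finite depth'' argument survives once the picture is corrected (a bounded point can only lie in finitely many of the renormalised copies, since these are pushed off to infinity, hence it eventually lies in a periodic cell), but as you wrote it the argument rests on the nonexistent contraction. A second, smaller inaccuracy: you require $\mathfrak C\setminus U$ itself to be a union of invariant polygons, whereas in fact it is the complement of the whole forward orbit $\bigcup_n\hat T^n(A)$ (which sweeps through parts of the atoms $B,C$ before returning) that decomposes into the cells of $E^\omega,B^\omega,C^\omega,D^\omega,(DCB)^\omega$ (resp.\ $A^\omega,B^\omega,C^\omega$ for $\frac14$, the induction atom there being $D$, the fixed letter of $\sigma_4$).

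Finally, the step you flag as delicate — that the return words really give $\sigma_4$, $\sigma_6$ and that the periodic cells exhaust the complement — is exactly the computation the paper carries out and you leave as ``I expect''; without it (and with the renormalisation mis-specified as above) the two substitution cases are not established. Your Step 4 for $\theta=\frac13$ (a direct exchange-of-three-pieces analysis with invariant hexagons giving $A^nB^nC$, $A^{n+1}B^nC$, $B^{n+1}A^nC$) is consistent with the statement; the paper gives no details for this case, so there is nothing to contradict, but the same caveat applies: the claimed codings still need the explicit computation of $\hat T$ from the Appendix data.
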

\begin{proof}
$\bullet$ We begin by the proof of the proposition for the angle $\frac{1}{6}$. We refer to Figure \ref{fig=T3}.
In order to obtain the description of the first return map $\hat{T}$ we need to rotate $\mathfrak{C}$ one time around $z_0$ and three times around $z_1$. At this step one piece has come back to $P_0$. We need two other iterations to be sure that every point is coming back to $P_0$. By a simple computation we see that $\mathfrak{C}$ has a partition in five pieces $A, B, C, D, E$ with return words given in the following array by $\begin{array}{|c|c|c|c|c|}
\hline
A&B&C&D&E\\
\hline01^30^2&01^30^3&01^40^2&01^40^4&01^50^4\\
\hline

\end{array}$. 
The triangle $E$ is an equilateral triangle (and the length of the side is equal to the edge of the regular hexagon). The half-lines which define $A$ and $B$ are parallel to one edge of the hexagon, and the edge of $B$ on the discontinuity line has a length equal to $2$. One edge of $C$ is parallel to one edge of $E$. Three edges of $D$ have the same length as the hexagon. 
The restriction of $\hat{T}$ to each of these five sets are some isometries whose vectorial parts are rotations of angle $0,\frac{\pi}{3},\frac{\pi}{3},\frac{2\pi}{3},0$. Remark that those angles are equal to $\frac{n\pi}{3}$ where $n$ is the return time to each piece.

We make an induction on $A$ for the map $\hat{T}$: 
The return map of $\hat{T}$ is conjugated to $\hat{T}$ by a translation of a vector parallel to the discontinuity line. A simple computation gives the different return words to $A$, this defines the substitution $\sigma_6$ as explained in Lemma \ref{leminducsub}. 
We have $$\mathfrak{C}=\displaystyle(\bigcup_{n\in\mathbb{N}}\hat{T}^n(A))\cup C_{B^\omega}\cup C_{C^\omega}\cup C_{D^\omega}\cup C_{E^\omega}\cup C_{(DCB)^\omega}.$$
The words $E^\omega, B^\omega, C^\omega,D^\omega$ and $(DCB)^\omega$ are periodic words of the language. The associated cells are two triangles for $E^\omega$ and $(DBC)^\omega$ and three hexagons for the other infinite words. We deduce the description of the language applying Lemma \ref{leminducsub} to the substitution. Thus the language is a substitutive one.

$\bullet$ The proof for the two other angles is based on the same method.
For $\theta=\frac{1}{4}$ we obtain the following array:
$\begin{array}{|c|c|c|c|}
\hline
A&B&C&D\\
\hline01^30^2&01^30^3&01^20^2&01^20\\ \hline\end{array}$. The associated substitution is $\sigma_4$. Figure \ref{fig-care} describes the map $\hat{T}$, the square has edges of length $1$ and every strip has width $1$. 

$\bullet$ Exactly the same thing can be done for  $\theta=\frac{1}{3}$. The computations are left to the reader.
\end{proof}

\begin{figure}
\begin{tikzpicture}[scale=1]
\draw (-5,0)--(0,0);
\draw (0,0)--(0,5);
\draw (-2,0)--(-2,5);
\draw (-1,0)--(-1,5);
\draw (-1,1)--(0,1);
\draw (-1.5,0.5) node{$C$};
\draw (-.5,0.5) node{$A$};
\draw (-.5,2.5) node{$B$};
\draw (-3.5,0.5) node{$D$};
\draw[->]  (.5,1)--(1.5,1) node[above]{$\hat{T}$};
\draw (2,0)--(7,0);
\draw (7,0)--(7,5);
\draw (2,1)--(7,1);
\draw (2,2)--(7,2);
\draw (6,0)--(6,1);
\draw (6.5,0.5) node{$\hat{T}A$};
\draw (4.5,0.5) node{$\hat{T}C$};
\draw (5.5,1.5) node{$\hat{T}B$};
\draw (6.5,3.5) node{$\hat{T}D$};
\end{tikzpicture}

\vspace{2mm}

\begin{tikzpicture}[scale=.7]
\draw (-5,0)--(0,0);
\draw (-1,0)--(0,0)--(0,1)--(-1,1)--cycle;
\draw (-1,0)--(-1,4);
\draw (-2,0)--(-2,4);
\draw (-3,0)--(-3,4);
\draw (-5,1)--(-1,1);
\draw (-5,2)--(-1,2);
\draw (-.5,.5)node{$\bullet$};
\draw (-1.5,.5)node{$A^\omega$};
\draw (-1.5,1.5)node{$B^\omega$};
\draw (-2.5,.5)node{$C^\omega$};
\end{tikzpicture}
\begin{tikzpicture}[scale=.7]
\draw (-3,0)--(3,0);
\draw (-1,0)--++(0,2);
\draw (0,0)--++(0,1)--++(-1,0)--++(0,-1);
\draw (1,-1)--++(0,1)--++(-1,0)--++(0,-1)--cycle;
\draw (-.5,.5)node{$\bullet$};
\draw (.5,-.5)node{$\bullet$};
\end{tikzpicture}
\caption{The map $\hat{T}$ for a symmetric map of angle $\frac{\pi}{2}$. We also describe the regular polygons, the cone and the substitutive language.}\label{fig-care}
\end{figure}
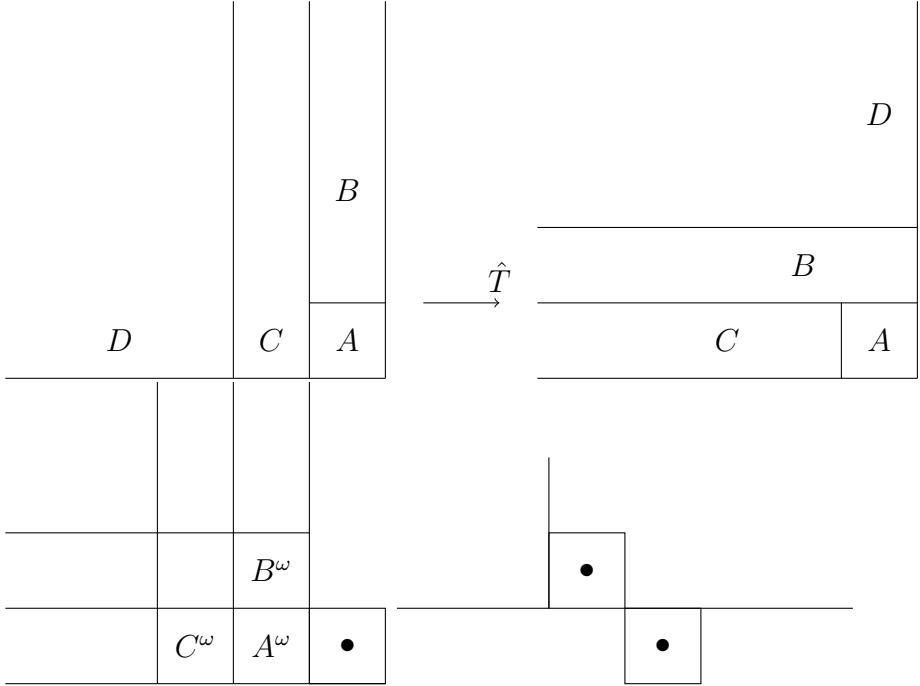

\begin{figure}
\begin{center}
\begin{tikzpicture}[scale=.3]
\draw (3,0)--(12,0);
\draw (12,0)--++(120:8);
\draw (12,0)--++(120:2)--++(60:-2);
\draw (12,0)--++(120:4)--++(0:-6);
\draw (12,0)--++(120:6)--++(0:-6);
\draw (6,0)--(8,3.42);
\draw[<-] (2,1)--(0,1);
\draw (1,2)node{$\hat{T}$};
\draw (-12,0)--(-3,0);
\draw (-3,0)--++(120:8);
\draw (-11,0)--++(120:8);
\draw (-7,0)--++(120:8);
\draw (-3,0)--++(120:2)--++(60:-2);
\draw (-3,0)--++(120:6)--(-8,1.7);
\draw (6,6) node{{\tiny $\hat{T}A$}};
\draw (8,4)node{{\tiny $\hat{T}C$}};
\draw (6,2)node{{\tiny $\hat{T}B$}};
\draw (9,2)node{{\tiny $\hat{T}D$}};
\draw (11,.5)node{{\tiny $\hat{T}E$}};

\draw (-4,.5)node{$E$};
\draw (-6,2)node{$D$};
\draw (-8,4)node{$C$};
\draw (-13,2)node{$A$};
\draw (-9,2)node{$B$};

\draw (-2,-10)--++(120:6);
\draw (-2,-10)--(0,-10)--++(60:2);
\draw (-2,-10)--++(120:2)--++(60:2)--++(2,0)--++(-60:2);

\draw (-10,-10)--(10,-10);
\draw (0,-10)--++(2,0)--++(-60:2)--++(-120:2)--++(-2,0)--++(120:2)--cycle;
\draw (-7,-8)node{$\mathfrak{C}$};
\draw (-1,-8.37)node{$\bullet$};
\draw (1,-11.63)node{$\bullet$};
\end{tikzpicture}

\begin{tikzpicture}[scale=.3]
\draw (-15,0)--(10,-0);
\draw (-2,0)--++(120:2)--++(60:2)--++(2,0)--++(-60:2)--(0,0);
\draw (-6,0)--++(120:2)--++(60:2)--++(2,0)--++(-60:2)--(-4,0);
\draw (-10,0)--++(120:2)--++(60:2)--++(2,0)--++(-60:2)--(-8,0);
\draw (-14,0)--++(120:2)--++(60:2)--++(2,0)--++(-60:2)--(-12,0);

\draw (-8,3.45)--++(120:2)--++(60:2)--++(2,0)--++(-60:2)--(-6,3.45)--(-8,3.45);
\draw (-12,3.45)--++(120:2)--++(60:2)--++(2,0)--++(-60:2)--(-10,3.45)--(-12,3.45);
\draw (-2,0)--++(120:10);

\draw (-1,1.63)node{$\bullet$};
\draw (-3,.5)node{\tiny {$E$}};
\draw (-5,1.5)node{$D^\omega$};
\draw (-9,1.5)node{$B^\omega$};
\draw (-13,1.5)node{$A^\omega$};
\draw (-7,5.5)node{$C^\omega$};
\end{tikzpicture}

\caption{Map $\hat{T}$ for a symmetric map of angle $\frac{\pi}{3}$. The regular polygons, the cone and the substitutive language. In the figure $E$ is an equilateral triangle of size 1. The polygon $D$ is a pentagon with three sides of length one and two of size 2. The bounded side of the cell $B$ is of size 2. All the angles are integer multiples of $\frac{\pi}{3}$.}\label{fig=T3}
\end{center}
\end{figure}
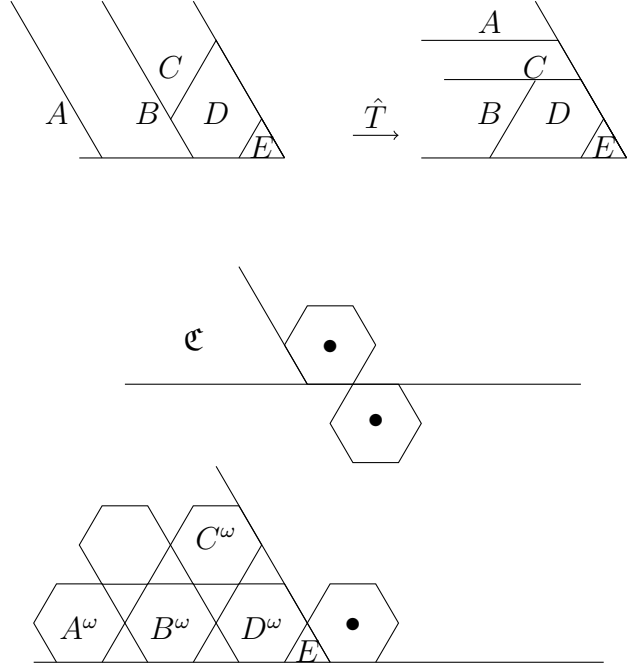

\begin{figure}
\begin{center}
\begin{tikzpicture}[scale=1]
\begin{scope}[shape=circle]
\tikzstyle{every node}=[draw]
\node (q_A) at (0,0) {$\substack{B,C,D\\E,DCB}$};
\end{scope}
\draw[->,shorten >=1pt] (q_A) .. controls +(75:1.2cm) and +(105:1.2cm) .. node[above] {$\sigma_{6}$} (q_A);
\end{tikzpicture}
\end{center}
\caption{Description of the substitutive language for the case $\theta=\frac{1}{6}$.}
\end{figure}
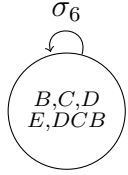

\subsection{Case $\theta=\frac{1}{8}$}\label{18}
We consider the piecewise rotation of the angle $\frac{\pi}{4}$. The case of angle $\frac{2\pi}{5}$ is not treated here since we will see that the study of the first case is closed to the outer billiard outside the regular octagon studied by Schwartz. The symbolic dynamics of the outer billiard outside the regular pentagon has been studied in \cite{Bed.Ca.11} and in \cite{Ta.95}. Using these results, a similar study in the other case can be easily deduced.

We split this part in several lemmas. The proof can be obtained by a direct computation left to the reader. We prefer to show some pictures which describe the dynamics.
\begin{lemma}
The map $\hat{T}$ is a piecewise isometry defined on ten pieces. \end{lemma}
\begin{proof}
 We compute the action of $\hat{T}$ on the set $\mathfrak C$. The iterates $\hat{T}^i\mathfrak{C}, i=1\dots 5$ are all included in the  lower half plane. Then the fifth iterate is split into two parts: one below the real axis and one above. It suffices to iterate these parts to obtain the ten pieces. They are described in Figure \ref{fig=T5} and Figure \ref{octa-ret}.

The first return map of $T$ to the cone $\mathfrak{C}$ is given by Figure \ref{fig=T5}. The pieces are denoted by letters $A,\dots, J$. It is defined on five pieces, three are unbounded and denoted $A, B, C$ and two are compact and denoted by $D$ and $\mathfrak{C}_1$. The set $\mathfrak{C}_1$ is the black set in Figure \ref{fig=T5}. There is a partition of $\mathfrak{C}_1$ in six subsets denoted $E,F ,\dots,J$ according to the different return times of points to $\mathfrak{C}$, see Figure \ref{octa-ret}. The list of the return words is given in the following array:
$$\begin{array}{|c|c|c|c|c|c|c|c|c|c|}
\hline
A&B&C&D&E&F&G&H&I&J\\
\hline
 01^40^3&01^50^3&01^40^4&01^50^4&01^50^5&01^60^5&01^70^6&01^60^4&01^70^5&01^60^6\\
 \hline
\end{array}
$$
As explained in the proof of Proposition \ref{prop-per}, the restriction of $\hat{T}$ to each of the ten sets is a rotation of an angle multiple of $\frac{\pi}{4}$. This multiple is equal to the length of the return word. 

\end{proof}

\begin{figure}[h]
\begin{center}
\begin{tikzpicture}[scale=.4]
\draw (-17,0)--(0,0)--++(135:15);
\draw (-2.818,0)--++(45:1);
\fill (-4.818,0)--(-3.414,1.414)--++(0,2)--(0,0)--cycle;
\draw (-6.818,0)--++(135:10);
\draw (-6.818,0)--++(135:2)--++(0,6.818);
\draw (-13.636,0)--++(135:10);
\draw (-17,2) node{$A$};
\draw (-12,2) node{$C$};
\draw (-6,4) node{$D$};
\draw (-10,7) node{$B$};

\draw (-17,-14)--(0,-14)--++(135:16);
\fill (-4.818,-14)--(-3.414,-12.586)--++(0,2)--(0,-14)--cycle;
\draw (0,-14)--++(135:6.818)--++(-12,0);
\draw (0,-14)--++(135:13.26)--++(-10,0);
\draw (0,-14)--++(135:6.818)--++(-2,0)--++(45:-6.818);
\draw (-13,-12) node{$\hat{T}C$};
\draw (-10,-8) node{$\hat{T}B$};
\draw (-13,-3) node{$\hat{T}A$};
\draw (-8,-12) node{$\hat{T}D$};
\draw (2,-2) node{$\hat T$};
\draw[->](3,0)--(3,-3);
\end{tikzpicture}
\caption{The map $\hat{T}$ associated to the angle $\frac{\pi}{4}$ and $\sigma=0$. 
The coordinates of points and length of sides are:
All the angles are integer multiple of $\frac{\pi}{4}$.
In the black set two sides have lengths $2+\sqrt{2}$. The two smaller ones have length 1.
The bounded sides of $C$ and $B$ have the same length $1+\sqrt{2}$. In the cell $D$ four edges have the same length 1, the other ones have the same length $2+\sqrt 2$.
}\label{fig=T5}
\end{center}
\end{figure}

\begin{corollary}
 There exists an invariant compact set for $\hat{T}$, denoted $\mathfrak{C}_1$.
\end{corollary}
\begin{proof}
This set is the black set in Figure \ref{fig=T5}. We have $$\mathfrak{C}_1=E\cup F\cup G\cup H\cup I\cup J.$$ 
This set is globally invariant due to the action of $\hat{T}$ described in Figure \ref{octa-ret}.
\end{proof}
\begin{figure}[h!]
\begin{center}
\begin{tikzpicture}[scale=.7]
\draw (0,0)--++(45:5);
\draw (0,0)--(7,0);
\draw (7,0)--++(135:7.1);
\draw (5,0)--(5,2);
\draw (3,0)--++(135:2.14);
\draw (6,0)--++(45:.71);
\draw (2,2)--++(0,3);
\draw (2.8,2.8)--++(1.4,0);

\draw[->] (-3,2)--(-1,2);

\draw (-10,0)--++(45:2.8);
\draw (-10,0)--(-3,0);
\draw (-3,0)--++(135:7.1)--++(0,-5);
\draw (-8,3)--++(2,0);
\draw (-7,0)--++(135:1.41);
\draw (-6,0)--++(45:2.1);
\draw (-4,0)--++(0,1);

\draw (1,.5) node{{\tiny $\hat{T}E$}};
\draw (3,1) node{{\tiny $\hat{T}F$}};
\draw (5.6,1) node{{\tiny $\hat{T}G$}};
\draw (2.8,3.5) node{{\tiny $\hat{T}H$}};
\draw (3.5,3.1) node{{\tiny $\hat{T}I$}};

\draw (-9,.5) node{$E$};
\draw (-7,1) node{$F$};
\draw (-5,.5) node{$G$};
\draw (-7,3.5) node{$H$};
\draw (-3.6,.3) node{$I$};

\draw (-2,2.5) node{$\hat T$};
\end{tikzpicture}
\end{center}
\caption{Dynamics inside $\mathfrak{C}_1$: the black compact set in Figure \ref{fig=T5}.
The cells $E$ and $H$ are isometric triangles of sides $1,1,\sqrt{2}$.
They are also isometric to the union of the cells of $G$ and $I$. Two sides of $G$ have the same lengths as two sides of $I$.
In the cell $F$ three sides have for lengths $1$. A simple computation allows us to obtain the other lengths.}\label{octa-ret}
\end{figure}

Denote by $\mathfrak{C}_3$ the orbit of the set $A$ under $\hat{T}$.
\begin{lemma}\label{pfffff} 
The set $\mathfrak{C}_3$ is $\hat{T}$-invariant. The first return map by $\hat{T}$ to $A$ is self-similar.
\end{lemma}

\begin{figure}
\begin{center}
\begin{tikzpicture}[scale=.3]
\draw (-20,0)--(0,0)--(-15,15);
\fill (0,0)--(-3.5,0)--(-2.5,1)--(-2.5,2.5);
\draw (-5,0)--(-15,10);
\draw (-10,0)--(-18,8);
\draw (-3.5,3.5)--(-18,3.5);
\draw (-7,7)--(-18,7);
\draw (-6,1)--(-6,6);
\draw (-8.5,0)--(-5,3.5);
\draw (-13.5,0)--(-10,3.5);
\draw (-18.5,0)--(-15,3.5);
\draw (-9.5,3.5)--(-9.5,6);
\draw (-12,3.5)--(-8.5,7);
\draw (-11,0)--(-11,2.5);
\fill[color=gray, opacity=.5] (-20,0)--(-10,0)--(-11,1)--(-11,2.5)--(-10,3.5)--(-8.5,3.5)--(-9.5,4.5)--(-9.5,6)--(-8.5,7)--(-7,7)--(-15,15);
\end{tikzpicture}
\caption{{\it The set $\mathfrak{C}_3$ is colored in grey: it is an union of the pieces of the return map to $A$.}}\label{fig=ret=14=A}
\end{center}
\end{figure}

\begin{proof}
A simple computation, see Figure \ref{fig=ret=14=A}, shows that it is self similar to the initial map $\hat{T}$, and the associated substitution $ \sigma_{8,3}$ is given by 
$$
\sigma_{8,3}:
\begin{cases} \begin{array}{|c|c|c|c|c|c|}
\hline
A&B&C&D&E&F\\
\hline
A&AB&AC&ABC&ABC^2A&AB^2CB\\
\hline
\end{array}\\


\\

\begin{array}{|c|c|c|c|}
\hline
G&H&I&J\\
\hline
AB^4C^2&AB^2CA&AB^4CA&AB^2CBC\\
\hline
\end{array}
\end{cases}
$$
\end{proof}

Now we define $$\mathfrak{C}=\mathfrak{C}_1\cup \mathfrak{C}_2\cup \mathfrak{C}_3.$$
The set $\mathfrak{C}_2$ is the complement in $\mathfrak{C}$ of the two other sets.
\begin{lemma}
The set $\mathfrak{C}_2$ is invariant under $\hat{T}$. Consider the complement in $\mathfrak{C}_2$ of the three big regular octagons. The restriction of $\hat{T}$ to this set is the dogbone map $S$.
\end{lemma}
\begin{proof}
The definition of $\mathfrak{C}_2$ and the fact that the two other sets $\mathfrak{C}_1, \mathfrak{C}_3$ are invariant prove the invariance. It is clear that the restriction of $\hat{T}$ is a bijective map. This part is an exchange of three pieces. The pieces are denoted by the same letters $B, C$ and $D$ as previously since they are restrictions of the initial pieces.
This map has a dynamics given in Figure \ref{octa-ret2}:
there are three big regular octagons corresponding to the periodic orbits: $B^\omega, C^\omega$ and $D^\omega$. 
Outside these octagons, the dynamics is an exchange of three pieces.
As the map defined in the set $\mathfrak{C}_1$, this map has been studied in \cite{Schwartz.10} (Section $7.3$ of Arxiv version) 
 where the shape is called a dogbone. All the periodic cells are regular octagons. 
\end{proof}

\begin{figure}

\begin{tikzpicture}[scale=.5]
\draw (2,0)--(-6.818,0)--++(135:2)--++(0,2)--++(45:2)--++(2,0)--++(135:2)--++(0,2)--++(45:2)--++(2,0)--++(135:-8.818);
\draw (2,0)--++(45:2)--++(0,2);
\draw (0,0)--++(135:2)--++(0,6.818);
\draw (0,0)--++(135:6.818);

\draw[->] (3,5)--(5,5);

\draw (17,0)--(8.182,0)--++(135:2)--++(0,2)--++(45:2)--++(2,0)--++(135:2)--++(0,2)--++(45:2)--++(2,0)--++(135:-8.818);
\draw (17,0)--++(45:2)--++(0,2)--++(135:2)--++(-8.818,0);
\draw (10.182,0)--++(45:6.818); 

\draw (-5,2) node{$C$};
\draw (-4,6) node{$B$};
\draw (0,2) node{$D$};

\draw (13,7) node{$\hat{T}B$};
\draw (10,2) node{$\hat{T}C$};
\draw (15,2) node{$\hat{T}D$};
\draw (4,5.2) node[above]{$\hat T$};
\end{tikzpicture}

\caption{ Definition of the dynamics inside $\mathfrak{C}_2$ and link with the Dogbone map.}\label{octa-ret2}
\end{figure}

The preceding discussion can be summarized in the following proposition.

\begin{proposition}\label{lien-dual}
The dynamics of $\hat{T}$ is given by three maps: the renormalization map defined by Lemma \ref{pfffff}, the dynamics of $S$ and the dynamics of $\hat{T}$ restricted to $\mathfrak{C}_1$.
\end{proposition}
\begin{proof}
Consider $z\in\mathfrak{C}$. There are three cases: either $z$ belongs to  $\mathfrak{C}_1$, either it belongs to $\mathfrak{C}_2$ or to $\mathfrak{C}_3$.
\begin{itemize}
\item First case: the result is obvious. 
\item If $z$ belongs to $\mathfrak{C}_2$ there are two cases: either it is a periodic point of code $B^\omega, C^\omega$ or $D^\omega$ or its orbit is described by $S$. 
\item The last case is if $z$ belongs to $\mathfrak{C}_3$. Then Lemma \ref{pfffff} implies that the renormalization process can be applied.
\end{itemize}
\end{proof}

As a by product we deduce:
\begin{proposition}\label{prop-detail}
 The language $\hat{L}$ of $\hat{T}$ is substitutive. It is the set of factors of the periodic words of the form $z^\omega$, where $z\in Z$. The set $Z$ is obtained as a substitutive language.  
 \end{proposition}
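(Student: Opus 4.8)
The plan is to iterate the renormalisation of Lemma~\ref{leminducsub}, but --- unlike the angles $\theta\in\{\frac14,\frac13,\frac16\}$ --- a single induction on the cone $\mathfrak C$ will not close the dynamics; one must chain together several inductions, each preceded by an affine change of coordinates, and organise the outcome into a finite graph of substitutions in the sense of Section~3.1. By the unnumbered proposition above it suffices to describe the language $L'$ of $\hat T$, the first return map of $T$ to $\mathfrak C$. The first task is thus the explicit computation of $\hat T$: rotate $\mathfrak C$ the requisite number of times around $z_0$ and $z_1$, read off the finite partition of $\mathfrak C$ into atoms $A,B,C,\dots$, and record for each atom its return word $v_i$ (coding the $T$-orbit between two consecutive visits to $\mathfrak C$) together with the isometry $\hat T$ induces on it --- a rotation by a multiple of $\frac{\pi}{4}$ composed with a translation. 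The coordinates and side lengths needed for this are collected in the Appendix.

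Several atoms of the partition are, or are contained in, polygonal cells of periodic words of $L'$, and their codings are read directly off the return words. On the complement $U\subset\mathfrak C$ one inducts again, and this is where the octagonal geometry forces extra work: the first return map of $\hat T$ to a suitable subregion of $U$ is \emph{not} conjugate to $\hat T$ itself, but after rotating and contracting by the appropriate power of $1+\sqrt2$ --- essentially the renormalisation Schwartz uses for the outer billiard outside the regular octagon, see \cite{Schwartz.10} --- it becomes one of finitely many model maps. One then checks that inducting on each model map again produces, up to the same type of change of coordinates, a map from the same finite list; each step contributes, via Lemma~\ref{leminducsub}, a substitution on the relevant alphabets, and each model map contributes finitely many periodic cells.

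The finitely many model maps then become the vertices of a finite oriented graph, each induction step an edge labelled by its substitution, with the coding words of the periodic cells attached to the vertices. By Lemma~\ref{leminducsub}, composing the substitutions along a finite path sends the periodic words attached to the terminal vertex into $L'$; conversely every point of $\mathfrak C$ lies, after finitely many renormalisation steps, either in a periodic cell at some vertex or else is coded by an infinite path in the graph, so every word of $L'$ arises this way. Hence $L'$ is precisely the set of factors of $\{z^\omega : z\in Z\}$, where $Z$ is the substitutive language read off this graph, which is what we wanted.

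The delicate point --- and the main obstacle --- is showing that the renormalisation \emph{closes up}: that the inductions together with the rescalings by powers of $1+\sqrt2$ never leave a fixed finite family of maps, and that the return words can be tracked precisely enough to write the substitutions explicitly. The identification with Schwartz's octagonal renormalisation is what makes this tractable, and it is also where the non-periodic bounded orbits come from --- they are exactly the points whose orbit follows an infinite path in the graph without ever meeting a periodic cell.
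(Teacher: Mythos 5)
Your overall strategy---first return to the cone $\mathfrak C$, further inductions converted into substitutions via Lemma \ref{leminducsub}, identification of the compact pieces with Schwartz's octagonal renormalisation, and a finite graph of substitutions assembling the language---is indeed the strategy of the paper. But as written there is a genuine gap, and you name it yourself: you never show that the renormalisation closes up, and that closing-up is precisely the content of the proposition, not a detail to be deferred. The paper does not leave it as an ``obstacle'': it first decomposes the cone into three $\hat T$-invariant sets $\mathfrak C=\mathfrak C_1\cup\mathfrak C_2\cup\mathfrak C_3$, where $\mathfrak C_3$ is the $\hat T$-orbit of the unbounded atom $A$, $\mathfrak C_1$ is the compact set partitioned into $E,\dots,J$, and $\mathfrak C_2$ is the complement (invariant because $\hat T$ is bijective), and then treats each piece by an explicit mechanism. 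In particular the key computation for the unbounded part contradicts your structural claim: the first return map of $\hat T$ to $A$ \emph{is} conjugate to $\hat T$ itself---no rescaling by a power of $1+\sqrt2$ is needed at that stage---and this self-similarity produces the explicit substitution $\sigma_{8,3}$ on the ten letters $A,\dots,J$, which is exactly what transports the codings of the two compact invariant sets to the whole of $\mathfrak C$.

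Schwartz's machinery enters only inside the compact pieces: the restriction of $\hat T$ to $\mathfrak C_1$ (six atoms $E,\dots,J$) and to $\mathfrak C_2$ (an exchange of three pieces, the ``dogbone'') are identified with maps whose renormalisation, and hence substitutive coding $\sigma_{8,1}$, $\sigma_{8,2}$, $\sigma_{8,4}$, is already established in \cite{Schwartz.10} and \cite{Schwartz.pet.12}; the total language is then read off as $Z=\bigcup_{m}\sigma_{8,3}^m(Z_1\cup Z_2)$. Without the invariant decomposition, the explicit self-similarity on $A$, and the precise identification of the two compact pieces with the renormalised models, your ``finitely many model maps'' and the graph connecting them are postulated rather than constructed, so the argument as it stands does not yet prove the statement; supplying those three ingredients (as the paper does) is what turns your outline into a proof.
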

\begin{proof}



$\bullet$ By Lemma \ref{pfffff} the coding of an orbit of an element of $\mathfrak{C}_3$ will be the image under $\sigma_{8,3}$ of the coding of the orbit of a point in the two compact sets $\mathfrak{C}_1$ and $\mathfrak{C}_2$.

$\bullet$ Now we study the map $\hat{T}$ restricted to $\mathfrak{C}_1$, see Figure \ref{octa-ret}.  Applying Lemma \ref{leminducsub} and the results of Schwartz, the symbolic dynamics of this map is thus ruled by a substitutive scheme denoted $\sigma_{8,1}$. We make a short description of this dynamics, we refer to \cite{Schwartz.10} for the details and to Figure \ref{fig:split:8}:
All the periodic cells are regular octagons of different sizes: First of all, the easiest periodic words are $E^\omega,F^\omega,G^\omega$ and $H^\omega$. Their cells are regular octagons inscribed in the polygons $E,F,G$ and $H$ as seen on the figure.
Then, there is a periodic orbit of period $E^2JIH^2FGF$. In Figure \ref{fig:split:8}, it corresponds the nine regular octagons closed to the biggest ones. The next level is made of three different orbits made by octagons of the same size: $E^2FG^2J, FH^2IG^2$ and $E^5FGEH^5JI$. 

Finally the language of the dynamics restricted to this invariant set is given by the images of these words by $\sigma_{8,1}$. We obtain a substitutive language made by periodic words of the form $z^\omega, z\in Z_1$ with
$$Z_1=\bigcup_{n\in\mathbb{N}}\{\sigma_{8,1}^n(E),\sigma_{8,1}^n(F),\sigma_{8,1}^n(G),\sigma_{8,1}^n(H)\}.$$

$\bullet$ To finish, we study the map $\hat{T}$ restricted to $\mathfrak{C}_2$. 
The language is also a substitutive language with a substitution denoted $\sigma_{8,2}$. Once again we refer to the work of Schwartz for more details and a description of the substitutions.
Finally applying Lemma \ref{leminducsub}, we deduce that in this compact set the language is made by periodic words of the form $z^\omega, z\in Z_2$ with
 $$Z_2=\sigma_{8,2}(Z_3)\cup \{C\}\cup\{D\}\cup\{B\} \quad\text{where}$$
$$ Z_3=\bigcup_{k\in\mathbb{N}}\{\sigma_{8,2}^k(C), \sigma_{8,2}^k(D)\}.$$

$\bullet$ To resume we have that the total language is made  by periodic words of the form $z^\omega, z\in Z$ with $$Z=\bigcup_{m\in\mathbb{N}}\bigcup_{z\in Z_1\cup Z_2}\{\sigma_{8,3}^m(z)\}.$$
Thus the substitution $\sigma_{8,3}$ allows to pass from the compact invariants sets to the whole space.
\end{proof}

All this discussion can be resumed by the following graph, which describes the substitutive language.

\begin{tikzpicture}
\begin{scope}[shape=circle,minimum size=.6cm]
\tikzstyle{every node}=[draw]
\node (q_A) at (0,0) {$A,\dots, J$};
\node (q_E) at (-6,0) {$B,C,D$};
\node (q_1) at (3,0) {$E,F,G,H$};
\node (q_2) at (-3,0) {$B,C,D$};
\end{scope}
\draw[->] (q_1) -- (q_A) ;
\draw (1.3,0) node[above]{$\sigma_{8,3}$};
\draw (-1.3,0) node[above]{$\sigma_{8,3}$};
\draw[->] (q_E)--(q_2);
\draw (-4.5,0) node[below]{$\sigma_{8,2}$};
\draw[->] (q_2) -- (q_A);
\draw[->,shorten >=2pt] (q_A) .. controls +(75:1.4cm) and +(105:1.4cm) .. node[above] {$\sigma_{8,3}$} (q_A);
\draw[->,shorten >=2pt] (q_E) .. controls +(75:1.4cm) and +(105:1.4cm) .. node[above] {$\sigma_{8,2}$} (q_E);
\draw[->,shorten >=2pt] (q_1) .. controls +(75:1.4cm) and +(105:1.4cm) .. node[above] {$\sigma_{8,1}$} (q_1);
\end{tikzpicture}
\begin{remark}
The dynamics is given by a substitutive system on a quite big alphabet. It seems more complicated than staying on the alphabet $\{0,1\}$. Nevertheless it appears to be the best way in order to describe the dynamics. Moreover there exists a morphism to pass to the initial alphabet.
We left to the reader the passage to the initial alphabet.
\end{remark}

\begin{remark}
With this description, it is easy to obtain an explicit example of a bounded and non periodic orbit. It suffices to iterate the substitution $\sigma_{8,1}$ on one letter. The limit is an infinite aperiodic word. It can not be the coding of a periodic orbit.
\end{remark}

\subsection{Comparison with the Theorem of Goetz-Quas}
We show on two examples how our result are related to \cite{Goet.Quas.09}. 
\begin{itemize}
\item First consider $\theta=\frac{1}{4}$. By Proposition \ref{prop-per} the following periodic words are in the language:
$A^\omega,B^\omega$ and $C^\omega$. The change of alphabet maps these words respectively to the periodic words $(0^31^3)^\omega, (0^21^2)^\omega$ and $(0^31^2)^\omega$. The same method applies for the word of period $\sigma_4(A)=DBC$. We obtain a periodic word of period $0^31^20^21^30^21^20^3$. All these words correspond to some rationally coded points associated to the sequence $(\frac{p_k}{q_k}=\frac{1}{4k+1})_{k\geq 0}$. In this case the language is totally described by these words.

\item Now consider the case $\theta=\frac{1}{8}$. The result of \cite{Goet.Quas.09} implies that the rotationally coded points form cells of one type: a regular polygon with $8$ edges. The periodic islands surrounding the points on this orbit touch so that the union forms an invariant annulus surrounding the origin. Each annulus correspond to one or two periodic words.
 In Figure \ref{fig:tout-8} we can see the rings of regular octagons. The first ring is made of 10 polygons, and the second of $18$ polygons. In our description, the first ring corresponds to the regular octagon inside $D$, which is invariant by $\hat{T}$ in $\mathfrak{C}_3$. It is coded by $D^\omega$ and thus also by $(0^51^5)^\omega$. The other ring corresponds to two periodic orbits coded by $(0^41^5)^\omega$ and $(0^51^4)^\omega$. In our description they correspond to the words $B$ and $C$. If we look at the third ring the periodic word is the image of $D$ by $\sigma_{8,1}$. Finally look at the regular octagons of smaller size and their first ring. It is coded by $(0^51^6)^\omega$. In our description it is coded by $F^\omega$.
  In other terms the cells of Figure \ref{fig:tout-8} can be splitted in the big octagons associated to the  rotationally coded points and the two dynamics inside $\mathfrak{C_3}$ and $\mathfrak{C_2}$ shown in Figure \ref{fig:split:8}. Thus we see that the words described in \cite{Goet.Quas.09} appear in our description of the total language. Nevertheless they do not represent all the infinite words since there exists non periodic infinite words. 
\end{itemize}

\begin{figure}
\includegraphics[width=6cm]{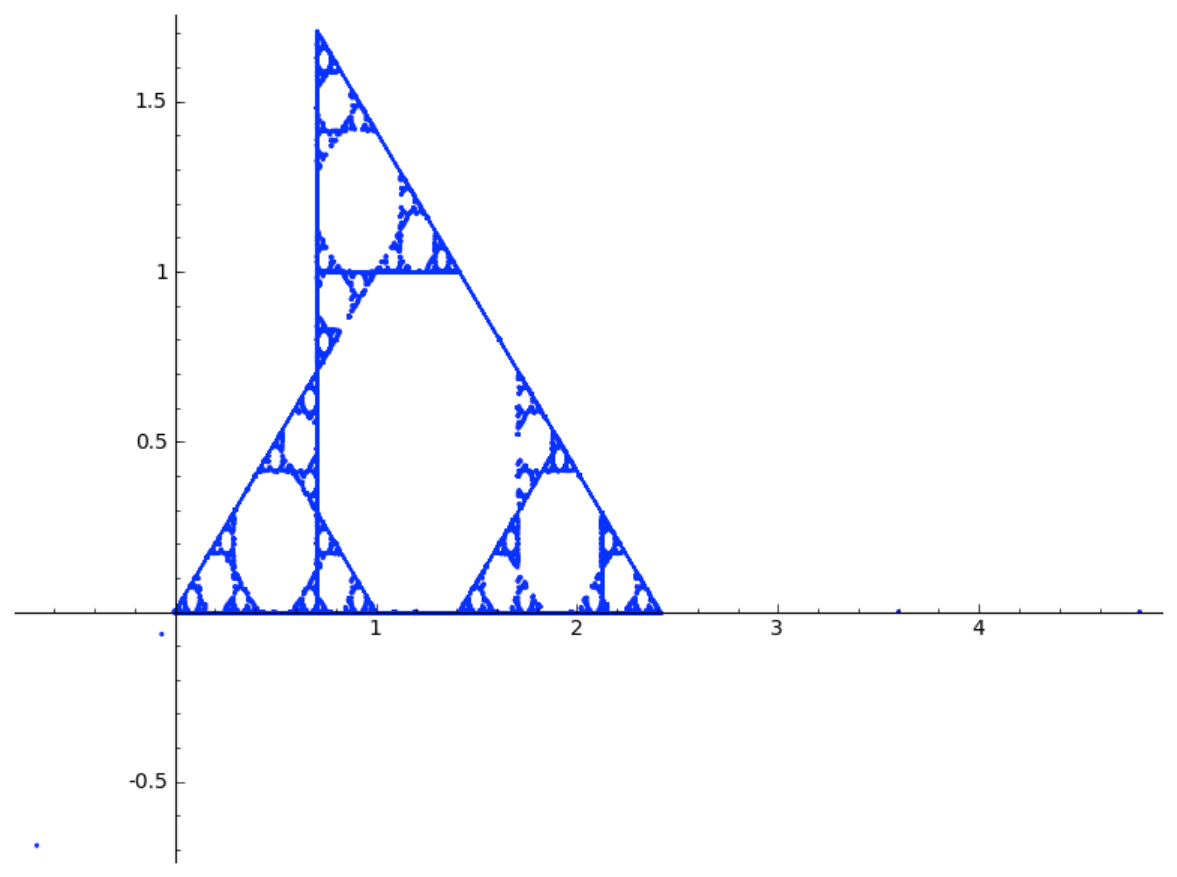}
\includegraphics[width=5cm]{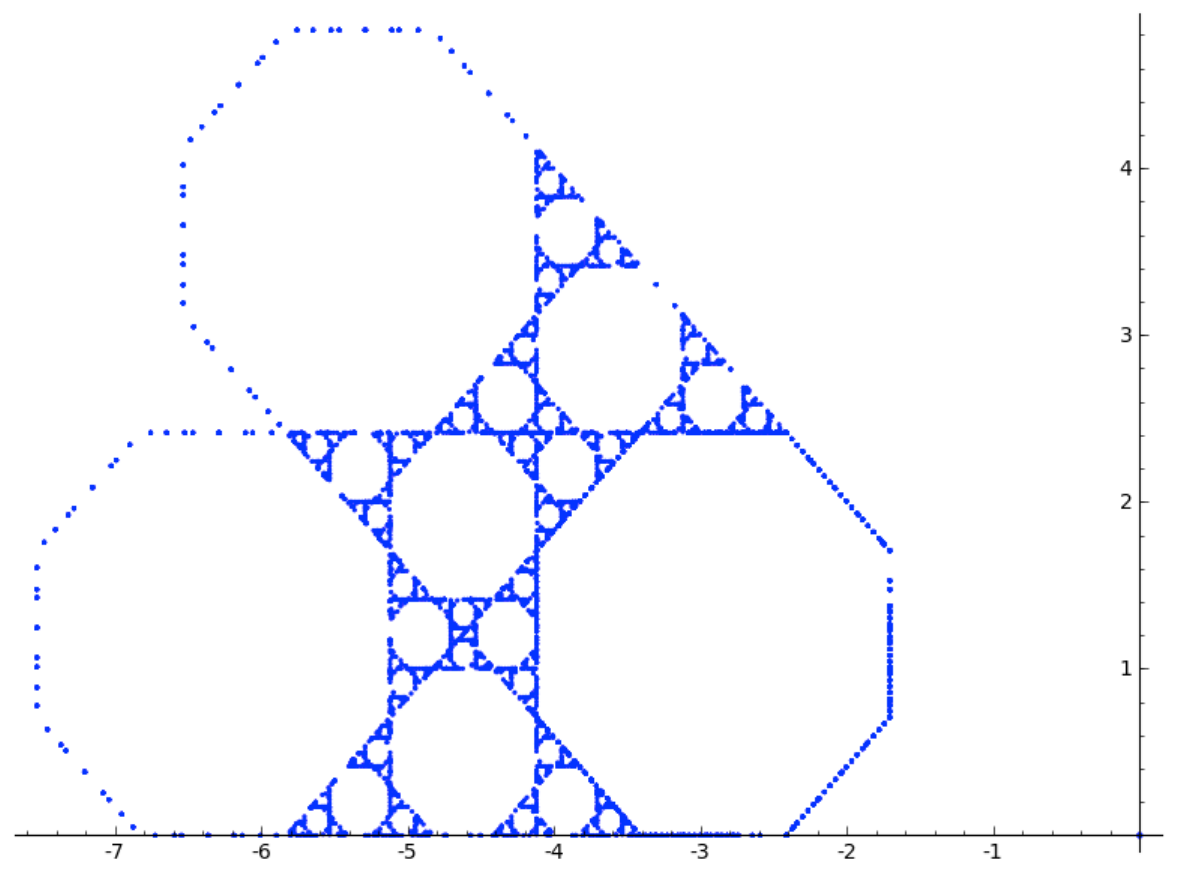}
\caption{Decomposition of the dynamics. On the left there is the dynamics inside $\mathfrak{C}_1$ and on the right the dynamics inside $\mathfrak{C}_2$ of the dogbone map.}\label{fig:split:8}
\end{figure}

\bibliographystyle{plain}
\bibliography{biblio-rot-v2}



\end{document}